\documentclass[a4paper,12pt]{amsart}

\usepackage[english]{babel}
\usepackage{color, mathtools}
\usepackage{graphicx}

\usepackage[alphabetic]{amsrefs}

\usepackage{etoolbox}
\apptocmd{\sloppy}{\hbadness 10000\relax}{}{}

\newtheorem{theorem}{Theorem}[section]
\newtheorem{corollary}[theorem]{Corollary}
\newtheorem{question}[theorem]{Question}
\newtheorem{lemma}[theorem]{Lemma}
\newtheorem{proposition}[theorem]{Proposition}
\newtheorem{example}[theorem]{Example}
\theoremstyle{definition}

\theoremstyle{remark}

\newcommand{\ZZ}{\mathbb{Z}}
\newcommand{\RR}{\mathbb{R}}
\newcommand{\CC}{\mathbb{C}}

\newcommand{\KK}{\mathbb{K}}

\newcommand{\Ve}{\operatorname{Vec}}
\newcommand{\Ch}{\operatorname{char}}

\title[bracket width for Lie algebras of vector fields is finite]{The bracket width for Lie algebras of vector fields is finite}

\author{Rafael B. Andrist}
\address{Faculty of Mathematics and Physics \\
University of Ljubljana \\
Ljubljana, Slovenia}
\email{rafael-benedikt.andrist@fmf.uni-lj.si}
\author{Andriy Regeta}
\address{Dipartimento di Matematica ``Tullio Levi-Civita'',
Universit\'a di Padova, Via Trieste 63, 35121 Padova \\ 
 Italy}
\email{andriy.regeta@math.unipd.it}
\begin{document}

\begin{abstract}
We prove that the bracket width of the Lie algebra of vector fields on any smooth affine algebraic variety of dimension $n$ is at most $(n+1)^2$. We give improved bounds for some families of $\mathbb{C}^*$-varieties, in particular for $\mathrm{SL}_n(\mathbb{C})$ and for the Koras--Russell cubic threefold. 
\end{abstract}

\maketitle

\section{Introduction}

The notion of the bracket width of a Lie algebra $L$ over a field $\mathbb{K}$ was defined in  \cite{MR4353334}. We define it as supremum of the lengths $\ell(z)$, where $z$ is running over the derived algebra $[L,L]$ and $\ell(z)$
is defined as the smallest number $m$ of Lie brackets $[x_i,y_i]$ needed to represent $z$ in the form
\[
z =\sum_{i=1}^{m}
[x_i, y_i].
\]
The bracket width is used in studying different aspects of Lie algebras,  see for example \cite{MR3601334} and \cite{LT13}.

The bracket width of a finite-dimensional complex simple Lie algebra is known to be one, see \cite{Br63}. In general, the bracket width of a finite-dimensional simple Lie algebra over an arbitrary field is at most two (see \cite{BN11}), but there is no known example of a finite-dimensional simple Lie algebra of bracket width exactly two.
The bracket width of current Lie algebras $\mathfrak g\otimes_{\mathbb{K}} A$ was studied in  \cite{KMR2025}, where $\mathfrak g$ is a simple finite-dimensional Lie algebra and $A$ is a commutative associative $\mathbb{K}$-algebra with the identity. It is shown there that the bracket width of such Lie algebras is at most two -- and both numbers, one and two -- appear as a bracket width of such algebras.

The first example of a simple Lie algebra with the bracket width strictly bigger than one was found in \cite{MR4353334} among Lie algebras of vector fields $\Ve(X)$ on smooth affine curves $X$ with 
trivial tangent bundle. Moreover, in \cite{MR4636914} it is proved that the bracket width of the latter Lie algebra is at most three. 
In the current paper we generalize this result and provide a bound for the bracket width of the Lie algebra $\Ve(X)$ for an arbitrary affine smooth variety $X$. We restrict ourselves to smooth affine varieties since the Lie algebra of vector fields on an affine variety $X$ is simple if and only if $X$ is smooth, see \cite{Sie96}*{Proposition 1}.

\begin{theorem}\label{mainthm}
 Let $X$ be a smooth affine $n$-dimensional variety over an algebraically closed field $\KK$ with $\mathrm{char}(\KK) \neq 2$. Then the bracket width of $\Ve(X)$ is at most $(n+1)^2$.
\end{theorem}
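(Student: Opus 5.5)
The plan is to reduce to local pieces on which $\Ve(X)$ looks like vector fields on an étale chart of $\mathbb{A}^n$, and to write each piece as a bounded number of brackets. Because $\Ve(X)$ is simple for smooth affine $X$ (\cite{Sie96}), the derived algebra is all of $\Ve(X)$. So we must show that every vector field $V$ is a sum of at most $(n+1)^2$ brackets. The count I aim for is $(n+1)$ open pieces, times $(n+1)$ brackets per piece.

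\emph{Step 1 (covering).} Choose regular functions $f_0,\dots,f_n\in\KK[X]$ with no common zero on $X$. On each principal open set $D(f_i)$ we want functions $x^{(i)}_1,\dots,x^{(i)}_n\in\KK[X]$ whose differentials form a basis of $\Omega^1$. I would construct them as follows: take a generic linear projection $X\to\mathbb{A}^n$ (Noether normalization), which is étale outside the zero set of its Jacobian $J$. The étale loci of $n+1$ generic projections should cover $X$, since the bad locus drops in dimension at each generic choice; I then set $f_i=J_i$. On $D(f_i)$ there are dual vector fields $\partial^{(i)}_j$ with $\partial^{(i)}_j(x^{(i)}_k)=\delta_{jk}$. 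After multiplying by a power $f_i^{M}$, these extend to global vector fields $\xi^{(i)}_j\in\Ve(X)$.

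\emph{Step 2 (partition).} By the Nullstellensatz, $1=\sum_i g_if_i^{N}$ for a suitably large $N$. Hence $V=\sum_i g_if_i^NV$, and each summand $V_i=g_if_i^NV$ can be written as $\sum_j a_{ij}\,\xi^{(i)}_j$ with $a_{ij}\in\KK[X]$. This requires $N\ge M$, and enlarging $N$ further if needed to absorb denominators. It therefore suffices to show that each $V_i$ is a sum of at most $n+1$ brackets.

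\emph{Step 3 (local bracket representation).} This is the main obstacle. The basic identity is
\[
[h\xi,\,g\xi]=(h\,\xi(g)-g\,\xi(h))\,\xi .
\]
Writing a single term $a\xi$ as a bracket thus amounts to solving a first-order equation $h\xi(g)-g\xi(h)=a$ with $h,g$ regular on all of $X$. I would first try $h=f^{k}x$ with $\xi(x)=f^{M}$. This turns the equation into $f^{k}\bigl(x\,\xi(g)-c\,g\bigr)$ plus lower-order terms, where $c$ is a nonzero scalar; the hypothesis $\Ch\KK\neq2$ should be used exactly here to keep such scalars invertible. I would then solve by choosing $g$ with prescribed leading behaviour and absorbing the error into one additional bracket shared by all $n$ directions. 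This would give $n$ brackets for the terms $a_{ij}\xi^{(i)}_j$ plus one correction bracket, i.e.\ $n+1$ per chart, and in total $(n+1)^2$.

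If the exact solvability in Step 3 fails, my fallback is to replace the single-direction equation by brackets of the form $[x_j\xi_j,\,\cdot\,]$ acting on $\bigoplus_j\KK[X]\xi_j$. I would check that this operator is surjective modulo the image of one further bracket, again using the assumption on the characteristic.
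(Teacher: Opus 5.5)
Your Steps 1 and 2 are essentially fine. One small point: the étale locus of a projection need not be a principal open set, since $\Omega^n_X$ need not be trivial. Still, one can choose $f_i$ vanishing on its complement with $\bigcap_i Z(f_i)=\emptyset$. With that, these steps play the role of the paper's stratification $X=A_n\supset A_{n-1}\supset\cdots$ together with its Nakayama argument. The genuine gap is Step 3, which is the whole content of the theorem and which you only describe as a hope. For fixed $h$, the map $g\mapsto h\,\xi(g)-\xi(h)\,g$ is a first-order differential operator on $\KK[X]$ whose cokernel is in general large. Nothing in your setup controls it: ``leading behaviour'' and ``lower-order terms'' presuppose a grading or filtration that a general chart $D(f_i)$ with $\xi=f_i^M\partial$ does not carry. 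That is the kind of argument the paper runs only in Section 3, where an Euler-type field $H$ supplies the grading. Already for $X=\mathbb{A}^n$, $\xi_j=\partial_j$, $h_j=x_j$, the image of $g\mapsto[x_j\partial_j,g\partial_j]=(x_j\partial_j g-g)\partial_j$ misses precisely the fields $x_jr_j\,\partial_j$ with $r_j$ independent of $x_j$. Your sketch gives no mechanism by which one further bracket produces $\sum_j x_jr_j\,\partial_j$ for arbitrary $r_j$. On a general chart, where the $\xi^{(i)}_j$ do not even commute, the situation is worse. Your use of $\Ch\KK\neq 2$ is likewise not attached to any concrete identity.

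The missing idea is to use \emph{two} brackets so that the derivative of the target coefficient cancels. This makes the construction linear in the coefficient, so no differential equation has to be solved. From your own identity, $[a\xi,x\xi]-[xa\xi,\xi]=2a\,\xi(x)\,\xi$ for every $a\in\KK[X]$. This is the Kaliman--Kutzschebauch formula \eqref{KK-formula} with $\delta=\eta$, and the factor $2$ is exactly where $\Ch\KK\neq2$ enters. Plugged into your charts (with $\xi=\xi^{(i)}_j$, $x=x^{(i)}_j$, $\xi(x)=f_i^M$ and $N\geq 2M$), it already yields a finite bound $2n(n+1)$. To reach $(n+1)^2$ the paper adds a rotation trick. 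Set $\theta=\delta(h)\eta+\eta(h)\delta$, and choose $\varepsilon_1,\dots,\varepsilon_{n-1}$ by Lemma \ref{lem-find-rotation} so that $\theta,[\varepsilon_k,\theta]$ span the tangent space at chosen points of every component of the current stratum. Then $f_0\theta+\sum_k f_k[\varepsilon_k,\theta]=\sum_k[\varepsilon_k,f_k\theta]+\bigl(f_0-\sum_k\varepsilon_k(f_k)\bigr)\theta$. That is $n-1$ single brackets plus two brackets for the $\KK[X]\theta$-term, so each of the at most $n+1$ modules $L_j$ costs $n+1$ brackets.
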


It is worth noting that simple Lie algebras with infinite bracket width exist. This was recently shown
in \cite{902976466}*{Theorem 4.12} (based on \cite{Robert}) where the authors constructed such a Lie algebra which is coming from $C^*$-algebras. Hence, Theorem \ref{mainthm} shows that simple Lie algebras of vector fields on affine varieties are principally different from those Lie algebras which may arise from $C^*$-algebras.

In Proposition \ref{boundforC*var}
 we give improved bounds (compared to the ones given in Theorem \ref{mainthm}) for some families of $\mathbb{K}^*$-varieties, where $\mathbb{K}$ is an algebraically closed field of characteristic zero. In particular, we prove (see Example \ref{bracketwidthSLn}) that the bracket width of $\Ve(\mathrm{SL}_n(\mathbb{K}))$ is at most
\[
n^2 + n \sim O(\dim \mathrm{SL}_n(\mathbb{K})).
\]
Furthermore,  we show that the bracket width of $\Ve(\mathrm{SL}_2(\mathbb{K}))$ is at most $3 < 2^2+2$ (see Proposition \ref{SL2generators})
as $\Ve(\mathrm{SL}_2(\mathbb{K}))$ 
is generated by three elements (see Corollary \ref{bracketboundSL2}) and the number of generators of a perfect Lie algebra is an upper bound for the bracket width by Lemma \ref{lem-generatorbound}.
In view of the latter lemma and of Theorem \ref{mainthm} it is of interest to know if $\Ve(X)$ is finitely generated as a Lie algebra for a (smooth) affine variety. Note that in case $X$ is a curve, this is known due to a recent result \cite{Mat22}*{Theorem 15}.

In all examples of simple Lie algebras of finite bracket width where we know the bracket width and the number of generators precisely, the bracket width is strictly smaller than the number of generators. Hence, we have the following question.

\begin{question}
Let $X$ be a smooth affine algebraic variety of positive dimension. Is the bracket width of $\Ve(X)$ always strictly smaller than the number of generators of the Lie algebra $\Ve(X)$?
\end{question}

\textbf{Acknowledgement:} 
The authors would like to thank Boris Kunyavskii for pointing out the reference \cite{Mat22} and a mistake in Example \ref{korasrussel} in an earlier draft.

\section{Smooth affine varieties}

A simple, yet extremely useful ingredient in our proof is the following formula due to Kaliman and Kutzschebauch. Its main virtue is that the right-hand side does not involve a commutator and is a multiple of the functions $f$ and $g$. Moreover, the expression is linear in $h$. Note that it vanishes identically in characteristic $2$ if $\delta = \eta$.
\begin{lemma}[Proof of \cite{MR2385667}*{Theorem 2}]
Let $\delta, \eta$ be vector fields on an affine variety $X$. Let $f,g,h \in \mathbb{K}[X]$. Then
\begin{equation}
\label{KK-formula} 
\tag{KK}
    [f \delta, h g \eta] - [h f \delta,  g \eta] = f g \cdot (\delta(h)\eta + \eta(h) \delta)
\end{equation}
\end{lemma}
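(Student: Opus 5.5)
This is a direct computation: expand both brackets with the Leibniz rule for brackets of vector fields multiplied by functions and subtract. The rule, valid for derivations $\delta,\eta$ of $\mathbb{K}[X]$ and functions $a,b$, is
\[
[a\delta, b\eta] = ab[\delta,\eta] + a\,\delta(b)\,\eta - b\,\eta(a)\,\delta .
\]
It follows by applying both sides to an arbitrary function $u$ and using that $\delta,\eta$ are derivations. I will first record this identity and then apply it twice.

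For the first term take $a=f$ and $b=hg$, and expand $\delta(hg)=h\delta(g)+g\delta(h)$. This gives
\[
[f\delta, hg\eta] = fhg[\delta,\eta] + fh\,\delta(g)\,\eta + fg\,\delta(h)\,\eta - hg\,\eta(f)\,\delta .
\]
For the second term take $a=hf$ and $b=g$, and expand $\eta(hf)=h\eta(f)+f\eta(h)$. This gives
\[
[hf\delta, g\eta] = hfg[\delta,\eta] + hf\,\delta(g)\,\eta - gh\,\eta(f)\,\delta - gf\,\eta(h)\,\delta .
\]

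Subtracting the second expansion from the first, the terms $fgh[\delta,\eta]$, $fh\,\delta(g)\eta$ and $gh\,\eta(f)\delta$ cancel in pairs. What remains is
\[
fg\,\delta(h)\,\eta + fg\,\eta(h)\,\delta = fg\bigl(\delta(h)\eta + \eta(h)\delta\bigr),
\]
which is \eqref{KK-formula}.

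The argument needs no hypothesis on $X$ or on the characteristic, since it only uses that vector fields on an affine variety are derivations of $\mathbb{K}[X]$ and that $\Ve(X)$ is a $\mathbb{K}[X]$-module. The only point requiring care is tracking the signs in the two expansions.
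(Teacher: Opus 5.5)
Your proposal is correct and is the same argument as the paper, which simply states that the identity follows by direct computation. Your expansion via $[a\delta,b\eta]=ab[\delta,\eta]+a\,\delta(b)\,\eta-b\,\eta(a)\,\delta$, with the signs and cancellations tracked as you do, is exactly that computation written out.
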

\begin{proof}
    The proof is a straight-forward computation.
\end{proof}

We need the following, well-known facts about coherent sheaves on affine varieties which can be found e.g.\ in the textbook of Hartshorne \cite{03842033}. We include a short discussion for the convenience of the reader. Let $\mathcal{O}$ and $\mathcal{T}$ denote the structure sheaf and the tangent sheaf, respectively, of the smooth affine variety $X$ over the algebraically closed field $\mathbb{K}$. By $\mathfrak{i}(p_1, \dots, p_m) \subset \KK[X] = \mathcal{O}(X)$ we denote the ideal of the regular functions vanishing in the points $p_1, \dots, p_m \in X$. The sheaves $\mathcal{O}$ and $\mathcal{T}$ are coherent. By a classical result of Serre (see e.g.\ \cite{03842033}*{Theorem III.3.7}) for any exact sequence of sheaves of $\mathcal{O}$-modules 
$0 \to \mathcal{A} \to \mathcal{B} \to \mathcal{C} \to 0$ with $\mathcal{A}$ being coherent, the section functor is exact, i.e.\ $\mathcal{B}(X) \to \mathcal{C}(X)$ is surjective. In particular, we have the following exact sequences of sheaves of $\mathcal{O}$-modules:
\begin{align*}
    0 \to \mathfrak{i}^2(p_1, \dots, p_m) \mathcal{O} \to \mathcal{O} \to \mathcal{O}/\mathfrak{i}^2(p_1, \dots, p_m) \mathcal{O} \to 0 \\
    0 \to \mathfrak{i}^2(p_1, \dots, p_m) \mathcal{T} \to \mathcal{T} \to \mathcal{T}/\mathfrak{i}^2(p_1, \dots, p_m) \mathcal{T} \to 0 
\end{align*}
Note that $\mathcal{O}/\mathfrak{i}^2(p_1, \dots, p_m) \mathcal{O}$ is isomorphic to a finite direct sum of skyscraper sheaves that encode the value and the first derivative of a regular function on $X$ in the points $p_1, \dots, p_m$. 

Similarly, $\mathcal{T}/\mathfrak{i}^2(p_1, \dots, p_m) \mathcal{T}$ is isomorphic to a finite direct sum of skyscraper sheaves that encode the value and the first derivative of a vector field on $X$ in the points $p_1, \dots, p_m$. 

Since these two sheaves are also obviously coherent, so are the sheaves $\mathfrak{i}^2(p_1, \dots, p_m) \mathcal{O}$ and $\mathfrak{i}^2(p_1, \dots, p_m) \mathcal{T}$ as a consequence of the Three Lemma for sheaves. Now it follows that the section functor is exact for both sequences. Thus, we can always prescribe the value and derivative of a regular function or of a vector field in finitely many points on a smooth affine variety.

\begin{lemma}
    \label{lem-find-KK-pair}
    Let $p_1, \dots, p_m \in X$ be distinct points. Then there exists a pair of vector fields $(\delta, \eta)$ on $X$ and a function $h \in \KK[X]$ such that
    $(\delta(h) \eta + \eta(h) \delta)_{p_j} \neq 0$ for all $j = 1, \dots, m$.
\end{lemma}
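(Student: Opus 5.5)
The plan is to take $\delta = \eta$ and exploit that $\mathrm{char}(\KK) \neq 2$. With this choice
\[
\delta(h)\eta + \eta(h)\delta = 2\,\delta(h)\,\delta ,
\]
and it is enough to find a vector field $\delta$ and a function $h$ such that at each $p_j$ we have $\delta_{p_j} \neq 0$ and $\delta(h)(p_j) \neq 0$.

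The first step is to construct $\delta$. For this we use the exactness of the section functor for the sequence involving $\mathcal{T}/\mathfrak{i}^2(p_1,\dots,p_m)\mathcal{T}$, proved just above the statement. The quotient sheaf records the value and first derivative of a vector field at the finitely many points $p_1, \dots, p_m$. So we may prescribe any nonzero tangent vector $v_j \in T_{p_j}X$ at each $p_j$ and obtain a global vector field $\delta$ with $\delta_{p_j} = v_j \neq 0$. We only need the value, not the derivative. Here $n = \dim X \geq 1$ is used, since this ensures that $T_{p_j}X \neq 0$ (smoothness gives $\dim T_{p_j}X = n$). The case $n=0$ is vacuous or irrelevant.

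The second step is to construct $h$. We use the analogous sequence for $\mathcal{O}$, which lets us prescribe the value and differential of a regular function at $p_1, \dots, p_m$. At each $p_j$ we choose a covector $\omega_j \in T^*_{p_j}X$ with $\omega_j(v_j) = 1$, which exists since $v_j \neq 0$. We then take $h \in \KK[X]$ with $dh_{p_j} = \omega_j$ for all $j$. This is possible because $\mathfrak{i}(p_j)/\mathfrak{i}^2(p_j) \cong T^*_{p_j}X$ and the points are distinct. Then $\delta(h)(p_j) = dh_{p_j}(v_j) = 1$.

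Combining the two steps gives
\[
(\delta(h)\eta + \eta(h)\delta)_{p_j} = 2 v_j \neq 0
\]
for every $j$, because $2 \neq 0$ in $\KK$.

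No step is a real obstacle. The only care needed is the identification of the quotient $\mathcal{O}/\mathfrak{i}^2\mathcal{O}$ with values plus differentials (cotangent vectors) at the points, which uses that $X$ is smooth and that the points are distinct, so that the skyscraper summands are independent. If one wanted to avoid the characteristic assumption, one could instead choose $\eta$ with $\eta_{p_j}$ independent of $\delta_{p_j}$ when $n \geq 2$. For this lemma, however, the $\delta = \eta$ choice with $\mathrm{char}(\KK) \neq 2$ suffices.
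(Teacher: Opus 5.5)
Your proposal is correct and is the same argument as the paper's: take $\delta=\eta$ nonvanishing at the $p_j$, then choose $h$ with $\delta(h)(p_j)=1$ by prescribing first derivatives, so the expression becomes $2\,\delta(h)\,\delta$. You make explicit the hypotheses $\Ch\KK\neq 2$ and $n\geq 1$, which the paper uses tacitly; note that for $n=0$ the statement actually fails rather than being vacuous, but that case is irrelevant because then $\Ve(X)=0$.
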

\begin{proof}
    We choose $\delta = \eta$ such that $\delta_{p_j} \neq 0$ for all $j = 1, \dots, m$. Next, we choose $h \in \KK[X]$ such that $\delta(h)(p_j) = 1 \neq 0$ by prescribing the derivative of $h$ for all $j = 1, \dots, m$. 
\end{proof}

\begin{lemma}
    \label{lem-find-rotation}
     For every point $p \in X$, every vector $v \in T_p X$ and every vector field $\mu$ on $X$ with $\mu_p \neq 0$ there exists a vector field $\varepsilon$ on $X$ such that $[\varepsilon,\mu]_p = v$.
\end{lemma}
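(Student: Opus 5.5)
We prove Lemma~\ref{lem-find-rotation} by working in local coordinates at $p$ that straighten $\mu$, and then realizing the required local vector field by a global one, using the fact established above that values and first derivatives of vector fields can be prescribed at finitely many points.

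\textbf{Local computation.} Since $X$ is smooth at $p$, choose regular functions $x_1, \dots, x_n$ near $p$ whose differentials form a basis of $T_p^*X$. The commutator $[\varepsilon,\mu]_p$ depends only on the $1$-jets of $\varepsilon$ and $\mu$ at $p$. Write $\varepsilon = \sum_i a_i \partial_{x_i}$ and $\mu = \sum_i b_i \partial_{x_i}$ locally. Then
\[
[\varepsilon,\mu]_p = \sum_i \bigl(\varepsilon(b_i)(p) - \mu(a_i)(p)\bigr)\,\partial_{x_i}|_p .
\]
If we take $\varepsilon$ with $\varepsilon_p = 0$, the first term vanishes and $[\varepsilon,\mu]_p = -\sum_i \mu_p(a_i)\,\partial_{x_i}|_p$. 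Since $\mu_p \neq 0$, there is a covector $\lambda \in T_p^*X$ with $\lambda(\mu_p) = 1$. Writing $v = \sum_i v_i \partial_{x_i}|_p$, we want $a_i(p) = 0$ and $d a_i(p) = -v_i \lambda$.

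\textbf{Globalization.} By the exactness of sections for $0 \to \mathfrak{i}^2(p)\mathcal{T} \to \mathcal{T} \to \mathcal{T}/\mathfrak{i}^2(p)\mathcal{T} \to 0$ discussed above, there is a global vector field $\varepsilon$ on $X$ whose $1$-jet at $p$ equals that of the local field $\sum_i a_i \partial_{x_i}$. Here the $a_i$ are chosen as linear functions in the local coordinates, for instance $a_i = -v_i \sum_j \lambda_j x_j$ with $x_j(p) = 0$. Because the bracket at $p$ depends only on $1$-jets, we obtain $[\varepsilon,\mu]_p = v$.

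\textbf{Main obstacle.} The only real point is justifying that $[\varepsilon,\mu]_p$ depends only on the classes of $\varepsilon$ and $\mu$ modulo $\mathfrak{i}^2(p)\mathcal{T}$, so that prescribing a $1$-jet is enough. The key observation is that if $\varepsilon \in \mathfrak{i}^2(p)\mathcal{T}$, then $\varepsilon(b)$ and $\mu(a)$ still vanish at $p$, so the bracket vanishes at $p$. This is a short check: functions in $\mathfrak{i}^2(p)$ have zero differential at $p$, and derivations map $\mathfrak{i}^2(p)$ into $\mathfrak{i}(p)$.

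An alternative, coordinate-free route avoids local coordinates altogether. Choose $g \in \KK[X]$ with $g(p)=0$ and $\mu(g)(p) = -1$, and a vector field $\nu$ with $\nu_p = v$, both possible by prescribing jets. Then set $\varepsilon = g\nu$. We compute
\[
[g\nu, \mu] = g[\nu,\mu] - \mu(g)\nu,
\]
which at $p$ equals $v$. This is cleaner, and it is the version I would actually write down.
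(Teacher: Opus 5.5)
Your proposal is correct. Your first argument is essentially the paper's proof. The paper also passes to local coordinates at $p$, writes $[\varepsilon,\mu]$ as in \eqref{eq-bracket}, and notes that prescribing the value and first derivative of $\varepsilon$ at $p$ solves the resulting linear system when $\mu_p\neq 0$. Your choice $\varepsilon_p=0$, $da_i(p)=-v_i\lambda$ with $\lambda(\mu_p)=1$ makes that solution explicit. Your remark that $[\varepsilon,\mu]_p$ only depends on $\varepsilon$ modulo $\mathfrak{i}^2(p)\mathcal{T}$ fills in a step the paper leaves implicit.

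The coordinate-free variant $\varepsilon=g\nu$ is also correct, since
\[
[g\nu,\mu]_p=g(p)[\nu,\mu]_p-\mu(g)(p)\,\nu_p=v.
\]
It is genuinely lighter, and needs only two ingredients:
\begin{enumerate}
\item a function with $g(p)=0$ and $\mu(g)(p)=-1$, for instance $-(y_j-y_j(p))/\mu(y_j)(p)$ for a coordinate function $y_j$ of an affine embedding $X\subset\mathbb{A}^N$ (some $\mu(y_j)(p)$ is nonzero because $\mu_p\neq 0$);
\item a global vector field with $\nu_p=v$, which exists by global generation of $\mathcal{T}$ on the affine $X$.
\end{enumerate}
So it avoids both prescribing $1$-jets of vector fields and the paper's reduction ``without loss of generality $X=\mathbb{A}^n$'', which is literally true only \'etale- or formally-locally. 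In exchange, the paper's coordinate formula shows $[\varepsilon,\mu]_p$ as an affine-linear function of the full $1$-jet of $\varepsilon$, a generality the lemma does not need.
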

\begin{proof}
    First consider the germ in the point $p$. For this purpose we may assume without loss of generality that $X = \mathbb{A}^n$. In explicit coordinates we obtain
    \begin{equation}
        \label{eq-bracket}
        [\varepsilon,\mu] = \sum_{k,\ell=1}^n \left( a_\ell \frac{\partial b_k}{ \partial x_\ell} - b_\ell \frac{\partial a_k}{ \partial x_\ell}  \right) \frac{\partial}{\partial x_k}    
    \end{equation}    
    where $\varepsilon = \sum_{\ell=1}^n a_\ell \frac{\partial}{\partial x_\ell}$ and $\mu = \sum_{k=1}^n b_k \frac{\partial}{\partial x_k}$. 
    Thus, it is sufficient to prescribe the value and first derivative of $\varepsilon$ in the point $p$ in order to solve the linear system of equations in Equation \eqref{eq-bracket}, evaluated in the point $p$, for any $\mu$ with $\mu_p \neq 0$.    
\end{proof}

\begin{proof}[Proof of Theorem \ref{mainthm}]
We will proceed by induction from $j$ to $j-1$ where $j = 0, 1, \dots, n$. Set $A_n := X$.
Let $B_1, \dots, B_m$ be the irreducible components of $A_j$ and choose points $p_1 \in B_1, \dots, p_m \in B_m$. By Lemma \ref{lem-find-KK-pair} we find vector fields $\delta$ and $\eta$ and a regular function $h$ on $X$ such that $\theta := (\delta(h) \eta + \eta(h) \delta)$ does not vanish in the points $p_1, \dots, p_m$. By Equation \eqref{KK-formula} we can write every element in the module $\KK[X] \theta$ as a sum of two Lie brackets:
\[
    [f \delta, h \eta] - [h f \delta, \eta] = f \cdot \theta, \quad f \in \KK[X] \; .
\]
By Lemma \ref{lem-find-rotation} there exist vector fields $\varepsilon_1, \dots, \varepsilon_{n-1}$ on $X$ such that $\theta, [\varepsilon_1,\theta], \dots, [\varepsilon_{n-1},\theta]$ span the tangent spaces $T_{p_1} X, \dots, T_{p_m} X$. 

Noting that $[\varepsilon_k, f \theta] = f [\varepsilon_k, \theta] + \varepsilon_k(f) \cdot \theta$, we obtain every element in
\[
L_j := \KK[X] \theta + \KK[X] \cdot [\varepsilon_1, \theta] + \dots + \KK[X]  \cdot [\varepsilon_{n-1}, \theta]
\]
as a sum of $n+1$ brackets. 

Since $\theta, [\varepsilon_1,\theta], \dots, [\varepsilon_{n-1},\theta]$ span the tangent space in at least one point of each irreducible component of $A_j$, they in fact span the tangent spaces in a non-empty Zariski-open subset of each irreducible component of $A_j$.
Set
\[
A_{j-1} := \{ x \in A_j \;:\; \theta, [\varepsilon_1,\theta], \dots, [\varepsilon_{n-1},\theta] \text{ do not span } T_x X \}
\]
and proceed by induction. We have that $\dim A_{j-1} \leq \dim A_j - 1$ and thus this process terminates after at most $n+1$ steps. 

Consider the coherent subsheaves $\mathcal{L}_j \subset \mathcal{T}$ of the tangent sheaf that are generated by the $L_0,\dots,L_n$. The finite sum $\mathcal{H} = \mathcal{L}_{0} + \mathcal{L}_{1} + \dots + \mathcal{L}_{n}$ is then also coherent. Let $\mathfrak{m}_x$ be the maximal ideal of the point $x \in X$ in $\mathcal{O}(X)$. By construction, $\mathcal{H}/\mathfrak{m}_x \mathcal{H}$ coincides with $T_x X$ for every $x \in X$. Thus, $\mathcal{H} = \mathcal{T}$ by an application of the Nakayama lemma, see \cite{03842033}*{Exercise II.5.8} or, for an explicitly carried out computation, Leuenberger \cite{06596151}*{Lemma 2.1 and Lemma 2.2}. Thus, we obtain $\Ve(X) = {L}_{0} + {L}_{1} + \dots + {L}_{n}$ as a global section of $\mathcal{T}$.

For each of the at most $n+1$ summands we used at most $n+1$ brackets. 
\end{proof}

%\begin{remark}
%One can consider some variations of this proof, e.g.\ if $X$ is a smooth curve, one can choose the vector fields $\delta$ and $\eta$ such that together they span $T_x X$ in every point. In that case, $L_0$ and $L_1$ will be contained in a set generated by the difference of two brackets. However, $L_0 + L_1$ won't necessarily be contained in this set, thus not improving the bound for curves. 
%\end{remark}

\section{$\mathbb{G}_m$-varieties over characteristic $0$}

If an affine variety over a field $\KK$ admits a $\mathbb{G}_m$-action, i.e.\ an action of the group of units $\KK^\ast$, then we can usually give better bounds for the bracket width. 

\begin{lemma}
\label{lem-vecfieldcomb}
Let $X$ be an affine variety over an algebraically closed field $\KK$. Let $V_1, \dots, V_n$ be vector fields that span the tangent space $T_x X$ in every point $x \in X$. Then every polynomial vector field on $X$ can be written as a polynomial linear combination $f_1 V_1 + \dots + f_n V_n$ with $f_1, \dots, f_n \in \KK[X]$.
\end{lemma}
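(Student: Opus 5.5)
The plan is to phrase the statement sheaf-theoretically and then use the Nakayama-type argument already employed at the end of the proof of Theorem \ref{mainthm}. Let $\mathcal{T}$ denote the tangent sheaf of $X$. This sheaf is coherent; on a possibly singular affine variety we read it as the sheaf of derivations, the dual of the sheaf of Kähler differentials. Consider the morphism of $\mathcal{O}$-modules
\[
\phi \colon \mathcal{O}^n \to \mathcal{T}, \qquad (f_1, \dots, f_n) \mapsto f_1 V_1 + \dots + f_n V_n .
\]
Its image $\mathcal{H} = \mathcal{O} V_1 + \dots + \mathcal{O} V_n$ is a coherent subsheaf of $\mathcal{T}$, because it is the image of a map between coherent sheaves. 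The claim is exactly that the map $\phi(X)$ on global sections is surjective onto $\Ve(X) = \mathcal{T}(X)$.

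First I show that $\mathcal{H} = \mathcal{T}$. Fix a point $x$ and look at stalks. The hypothesis says that the images of $V_1, \dots, V_n$ span the fibre $\mathcal{T}_x/\mathfrak{m}_x \mathcal{T}_x$, which I identify with $T_x X$. By Nakayama's lemma over the local ring $\mathcal{O}_{X,x}$, applied to the finitely generated module $\mathcal{T}_x$, the elements $V_1, \dots, V_n$ then generate $\mathcal{T}_x$. So $\mathcal{H}_x = \mathcal{T}_x$ for every $x$, and hence $\mathcal{H} = \mathcal{T}$. This is the same step as the one in the proof of Theorem \ref{mainthm}, with the references \cite{03842033}*{Exercise II.5.8} and \cite{06596151}.

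Second, I pass to global sections. We have an exact sequence $0 \to \ker \phi \to \mathcal{O}^n \to \mathcal{T} \to 0$, and $\ker\phi$ is coherent. By Serre's theorem on affine varieties \cite{03842033}*{Theorem III.3.7}, as recalled earlier in the paper, the map $\mathcal{O}(X)^n \to \mathcal{T}(X)$ is surjective. Therefore every vector field is of the form $f_1V_1+\dots+f_nV_n$ with $f_i \in \KK[X]$. Alternatively, one can work purely algebraically: the $\KK[X]$-module $M = \Ve(X)$ is finitely generated, and so is the quotient $N = M/\sum \KK[X]V_i$. Since $N/\mathfrak{m}N = 0$ for every maximal ideal $\mathfrak{m}$, Nakayama applied to the localizations gives $N_{\mathfrak{m}} = 0$ for all $\mathfrak{m}$, and therefore $N = 0$.

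The main obstacle is the identification of the fibre $M/\mathfrak{m}_x M$ with $T_x X$ when $X$ may be singular. What the hypothesis really gives is that the vectors $(V_i)_x$ span $T_x X$. The natural evaluation map $M/\mathfrak{m}_x M \to T_x X$ need not be injective at singular points, and Nakayama needs spanning of $M/\mathfrak{m}_x M$ itself. I would therefore first treat the smooth case, where $\mathcal{T}$ is locally free of rank $\dim X$ and the fibre is exactly $T_x X$, which is the setting used in the paper. For singular $X$, I would interpret ``span the tangent space'' as spanning the fibre of the tangent sheaf, and remark that this is the reading under which the lemma is applied.
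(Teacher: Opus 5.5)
Your argument is the one the paper intends. The paper's proof is only a pointer to Nakayama's lemma and its references, and your two steps fill it in correctly: stalkwise Nakayama, then passing from $\mathcal{H} = \mathcal{T}$ to global sections, either via Serre's theorem or via localizing the finitely generated module $\Ve(X)/\sum_i \KK[X]V_i$. For smooth $X$, where $\mathcal{T}_x/\mathfrak{m}_x\mathcal{T}_x = T_xX$, your proof is complete, and this covers every example in the paper.

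The one gap is the singular case, which you redefine rather than prove. The lemma assumes that the values of the $V_i$ span the Zariski tangent space $T_xX$; you replace this by spanning the fibre $\mathcal{T}_x/\mathfrak{m}_x\mathcal{T}_x$. The paper does not discuss this point either. No reinterpretation is needed, because surjectivity onto $T_xX$ already forces the $V_i$ to generate the stalk:

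\begin{enumerate}
\item Put $R = \mathcal{O}_{X,x}$. Choose $x_1, \dots, x_e \in \mathfrak{m}_x$ whose classes form a basis of $\mathfrak{m}_x/\mathfrak{m}_x^2$; they then generate $\mathfrak{m}_x$.
\item Choose $V_{i_1}, \dots, V_{i_e}$ whose values form a basis of $T_xX = (\mathfrak{m}_x/\mathfrak{m}_x^2)^*$. Then $\det\bigl(V_{i_k}(x_j)\bigr)$ does not vanish at $x$, so it is a unit in $R$. Inverting the matrix gives $W_1, \dots, W_e \in \sum_i R V_i$ with $W_j(x_l) = \delta_{jl}$.
\item For $D \in \mathcal{T}_x = \mathrm{Der}_{\KK}(R)$, the derivation $D' = D - \sum_j D(x_j) W_j$ kills every $x_l$.
\item Writing $f = f(x) + \sum_l a_l x_l$ gives $D'(f) = \sum_l D'(a_l)\, x_l$. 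By induction, $D'(R) \subseteq \mathfrak{m}_x^k$ for all $k$, so $D' = 0$ by Krull's intersection theorem.
\end{enumerate}

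Hence $\mathcal{T}_x = \sum_i R V_i$ at every point, with no identification of fibres needed. This even shows that the evaluation map $\mathcal{T}_x/\mathfrak{m}_x\mathcal{T}_x \to T_xX$ is an isomorphism as soon as it is onto. Your global step then proves the lemma for arbitrary affine $X$.
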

\begin{proof}
 This is an application of the Nakayama lemma, see \cite{03842033}*{Exercise II.5.8} or, for an explicitly carried out computation, Leuenberger \cite{06596151}*{Lemma 2.1 and Lemma 2.2}
\end{proof}

\begin{proposition}\label{boundforC*var}
Let $X$ be an affine variety over an algebraically closed field $\KK$ with $\Ch \KK = 0$. Let $x_1, \dots, x_N \in \KK[X] \setminus \KK$ such that $\KK[X] = \KK + (x_1, \dots, x_N) \KK[X]$. Let $V_1, \dots, V_n$ be vector fields that span the tangent space in every point of $X$. Let $H$ be a vector field that induces a $\mathbb{G}_m$-action and hence a $\ZZ$-grading on $\KK[X]$. Further assume that
\begin{enumerate}
\item 
\[
[H, V_k] = c_k V_k \quad \text{ with } c_k \in \ZZ \text{ for } k = 1, \dots, n \; .
\]
Let $M = \#\{ k \in \{1, \dots, n\} \,:\, c_k = 0 \}$ be the number of the vanishing constants $c_k$.
\item
\[
H(x_\ell) = d_\ell x_\ell \quad \text{ with } d_\ell \in \ZZ \setminus \{0\} \text{ for } \ell = 1, \dots, N \; .
\]
\item For each $k$ with $c_k = 0$ there exist vector fields $W_k, W'_k$ on $X$ such that $[W_k, W'_k] = V_k$.
\end{enumerate} 
If $V_k(x_\ell) \in (x_1, \dots, x_N) \KK[X]$ for all $k = 1, \dots, n$ and $\ell = 1, \dots, N$, then the bracket width of the Lie algebra of polynomial vector fields on $X$ is bounded by $1 + N + M$.

If $V_k(x_\ell) \notin (x_1, \dots, x_N) \KK[X]$ for some $k = 1, \dots, n$ and $\ell = 1, \dots, N$, then the bracket width of the Lie algebra of polynomial vector fields on $X$ is bounded by $1 + N + M + \Delta$ where $\Delta$ is the number of brackets needed to write $H \mod (x_1, \dots, x_N) \KK[X] \cdot H$.

\end{proposition}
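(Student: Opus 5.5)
We write an arbitrary vector field as $\xi = \sum_k f_k V_k$ with $f_k \in \KK[X]$ (Lemma \ref{lem-vecfieldcomb}), and we split each coefficient as $f_k = a_k + g_k$ with $a_k \in \KK$ and $g_k \in (x_1,\dots,x_N)\KK[X]$. This is possible since $\KK[X] = \KK + (x_1,\dots,x_N)\KK[X]$. Further, write $g_k = \sum_\ell x_\ell g_{k\ell}$ and group by $\ell$:
\[
\xi = \sum_k a_k V_k + \sum_{\ell=1}^N x_\ell Y_\ell, \qquad Y_\ell := \sum_k g_{k\ell} V_k .
\]
The budget is then one bracket for the constant part with $c_k \neq 0$, one bracket for each $\ell$, and one bracket (using (3)) for each constant part with $c_k=0$. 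That gives $1+N+M$.

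\textbf{Constants with $c_k \neq 0$.} Here $\sum_{c_k\neq 0} a_k V_k = [H, \sum_{c_k \neq 0} (a_k/c_k) V_k]$ is a single bracket, using characteristic $0$ to divide.

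\textbf{Constants with $c_k = 0$.} For each such $k$, hypothesis (3) gives $a_k V_k = [a_k W_k, W'_k]$, costing $M$ brackets in total.

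\textbf{The terms $x_\ell Y_\ell$.} I aim to write $x_\ell Y_\ell = [x_\ell H, Z_\ell] + (\text{correction})$. Using $H(x_\ell) = d_\ell x_\ell$ we get
\[
[x_\ell Z, H] = x_\ell[Z,H] - d_\ell x_\ell Z = -x_\ell\bigl([H,Z] + d_\ell Z\bigr).
\]
So it suffices to solve $[H,Z] + d_\ell Z = -Y_\ell$ for $Z$. The operator $\mathrm{ad}_H$ is diagonalizable on $\Ve(X)$ via the $\ZZ$-grading, since $[H, gV_k] = (\deg g + c_k) gV_k$ for homogeneous $g$. Decompose $Y_\ell$ into graded pieces, each being a sum $\sum_k g V_k$ with $g$ homogeneous. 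On the piece of weight $w$ we divide by $w+d_\ell$ whenever $w \neq -d_\ell$. The obstacle is the weight $-d_\ell$ component of $Y_\ell$. I would first try to exploit the freedom in distributing terms among the $x_\ell$. A monomial term $x_\ell x_{\ell'}\cdots$ can be assigned to whichever $x$ avoids the bad weight, so the only unavoidable leftovers come from terms whose total weight is $0$. The overall bracket $[x_\ell Z, H]$ has weight $w + d_\ell$, so the bad terms are exactly the weight-$0$ part of $\sum g_k V_k$. I suspect the correct treatment is to assign the weight-$0$ part of $\xi$ differently, namely to absorb it as $[H,\cdot]$-unreachable and handle it through $[x_\ell Z, H]$ with a correction term. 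This is where the hypothesis $V_k(x_\ell) \in (x)$ enters. Alternatively, one can use brackets $[x_\ell H, Z]$, which produce $x_\ell[H,Z] - Z(x_\ell)H$. The term $Z(x_\ell)H$ lies in $(x)\KK[X]H$ when $V_k(x_\ell)\in(x)$, and it can then be iteratively absorbed into the graded solving (its weight shifts). Without that hypothesis it leaves a residue $H \bmod (x)\KK[X] H$, costing the extra $\Delta$ brackets.

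\textbf{Plan and main obstacle.} Concretely, I will use brackets $[x_\ell H, Z_\ell]$, solve the graded equations for $Z_\ell$ degree by degree, and show that the $H$-multiples produced, being in $(x)\KK[X]H$ (which is itself of the form $\sum x_\ell(\cdots)$), can be fed back into the same $N$ brackets by linearity in $Z_\ell$. The main obstacle is making this fixed-point/triangular argument converge, together with handling the weight $-d_\ell$ resonance. I expect the grading with $d_\ell \neq 0$ to give a filtration making the system triangular. In the second case the constant part of the $H$-coefficient escapes this process, and writing it costs $\Delta$ extra brackets.
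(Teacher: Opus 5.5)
There is a genuine gap. Your budget of $1+N+M$, with $[x_\ell H,\cdot]$ doing the main work and $\Delta$ paying for the constant term of the $H$-coefficient, is the right one, and your treatment of the constants $a_kV_k$ is correct. But the two points you leave open are exactly the content of the proof, and your scheme as set up does not close.

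\textbf{The weight-zero part of $Y_\ell$.} In $[x_\ell H,Z_\ell]=x_\ell[H,Z_\ell]-Z_\ell(x_\ell)H$, the field $[H,Z_\ell]$ has no weight-zero component. So the main term never produces the weight-zero part of $Y_\ell$, and your proposal has no other mechanism that supplies it. Redistributing among the $x_\ell$ does not help in general. For the Euler field on $\mathbb{A}^n$ with the $x_\ell$ the coordinates, all $d_\ell=1$, so every splitting of $x_1^2\partial_{x_1}$ as $\sum_\ell x_\ell Y_\ell$ has all $Y_\ell$ of weight $0$. The ``weight $-d_\ell$ resonance'' you discuss belongs to the other form, $[x_\ell Z,H]=-\mathrm{ad}_H(x_\ell Z)$. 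That form lies in the image of $\mathrm{ad}_H$, so it can never reach a weight-zero field, and $N$ such brackets give nothing beyond one bracket $[H,\cdot]$.

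The missing idea is to let the single bracket $[H,\cdot]$ absorb, by linearity, \emph{every} component of nonzero total weight, not only the constants. The brackets $[x_\ell H,\cdot]$ are then used only for the total-weight-zero part lying in $(x_1,\dots,x_N)\KK[X]$. Write that part as $\sum_\ell x_\ell\widetilde F_{k\ell}V_k$. Each $\widetilde F_{k\ell}V_k$ has weight $-d_\ell\neq 0$, so $[x_\ell H,\widetilde f_{k\ell}V_k]$ with $\widetilde f_{k\ell}=-\widetilde F_{k\ell}/d_\ell$ produces $x_\ell\widetilde F_{k\ell}V_k$ up to an $H$-term.

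\textbf{The $H$-terms.} No fixed-point or filtration argument is needed, provided you do not expand $H$ in the $V_k$; that expansion is what creates a genuine iteration. Put $gH$ into the second slot of $[H,\cdot]$ and $\widetilde g_\ell H$ into the second slot of $[x_\ell H,\cdot]$. These change only the coefficient of $H$, by $H(g)$ and $x_\ell\bigl(H(\widetilde g_\ell)-d_\ell\widetilde g_\ell\bigr)$ respectively. The system is then triangular:
\begin{enumerate}
\item Choose $f_k,\widetilde f_{k\ell}$ to obtain the required $V_k$-coefficients.
\item Kill the resulting $H$-coefficient $s=\sum_{k,\ell}\widetilde f_{k\ell}V_k(x_\ell)$. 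Remove its nonzero-weight part via $H(g)$. Its weight-zero part lies in $(x_1,\dots,x_N)\KK[X]$ when all $V_k(x_\ell)$ do, so write it as $\sum_\ell x_\ell h_\ell$ with $h_\ell$ of weight $-d_\ell$, and take $\widetilde g_\ell=-h_\ell/(2d_\ell)$.
\end{enumerate}
Only the constant term of $s$ escapes this. In the second case it is removed by the $\Delta$ extra brackets, scaled by that constant. Their leftover in $(x_1,\dots,x_N)\KK[X]\cdot H$ is absorbed by the same choice of $g$ and $\widetilde g_\ell$.
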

\begin{proof}
Let $f_k, g, \widetilde{f}_{k \ell}, \widetilde{g}_\ell \in \KK[X]$ for $k = 1, \dots, n$ and $\ell = 1, \dots, N$.
Consider the sum of the following $1 + N$ brackets:
\begin{equation}
\label{eq-cstarterms}
\begin{split}
&[H, \sum_{k=1}^n f_k V_k + g H] + \sum_{\ell=1}^{N} [x_\ell H, \sum_{k=1}^n \widetilde{f}_{k\ell} V_k + \widetilde{g}_\ell H] \\
&= \left(H(f_k) + c_k f_k + \sum_{\ell=1}^{N} x_\ell (H(\widetilde{f}_{k\ell}) + c_k \widetilde{f}_{k\ell}) \right) V_k \\
&+ \left( H(g) + \sum_{\ell=1}^{N}\left(x_\ell H(\widetilde{g}_\ell) - x_\ell d_\ell \widetilde{g}_\ell - \sum_{k=1}^{n} \widetilde{f}_{k\ell} V_k(x_\ell) \right) \right) H.
\end{split}
\end{equation}
By Lemma \ref{lem-vecfieldcomb} we need to obtain every element of the form $\sum_{k=1}^n F_k V_k$ where $F_k \in \KK[X]$. Given any regular function $F_k$, we consider its decomposition as a sum of homogeneous functions with respect to the grading induced by $H$. Since the left-hand side is linear in $f_k, g, \widetilde{f}_{k \ell}, \widetilde{g}_l$, it is enough to treat the case when $F_k$ is homogeneous. Let $q \in \ZZ$ such that $H(F_k) = q F_k, q \in \ZZ$. We will achieve this in two steps: In the first step, we choose the functions $f_k$ and $\widetilde{f}_{k\ell}$ such that the coefficients in front of $V_k$ on right-hand side of Equation \eqref{eq-cstarterms} equal $F_k$ for each $k$. In the second step, we choose the functions $g$ and $\widetilde{g}_\ell$ such that the coefficients in front of $H$ vanish.
\begin{enumerate}
\item 
\begin{enumerate}
\item We first deal with the case $c_k \neq 0$.
\begin{enumerate}
    \item If $q \neq -c_k$, we set $f_k = F_k/(q + c_k)$ and $\widetilde{f}_{k \ell} = 0$. Since $c_k \neq 0$, this includes the constant functions $F_k$, which are of weight $0$. 
    \item If $q = -c_k$, then $F_k$ must be non-constant and we can factor out one of the $x_1, \dots, x_N$ and write $F_k = x_\ell \widetilde{F}_{k\ell}$. 
    Plugging this into $H(F_k) = q F_k$ yields $H(\widetilde{F}_{k \ell}) + c_{k} \widetilde{F}_{k \ell} = - d_\ell \widetilde{F}_{k \ell}$ with $d_\ell \neq 0$. Set $f_k = 0$ and $\widetilde{f}_{k \ell} = -\widetilde{F}_{k \ell}/d_\ell$.
\end{enumerate}
\item Next we consider the case $c_k = 0$.
\begin{enumerate}
    \item Again, we set $f_k = F_k/q$ and $\widetilde{f}_{k \ell} = 0$ if $q \neq 0$. However, in this case, the non-zero constant functions $F_k$ are not of this form. {\color{red} } 
    \item If $q = 0$, and if $F_k$ is not constant, then we can factor out one of the $x_1, \dots, x_N$ and write $F_k = x_\ell \widetilde{F}_{k\ell}$. We have $H(\widetilde{F}_{k \ell}) = - d_\ell \widetilde{F}_{k \ell}$ with $d_\ell \neq 0$. Set $f_k = 0$ and $\widetilde{f}_{jk} = - \widetilde{F}_{jk} / d_\ell$.
    \item In order to obtain the constant terms for $F_k$ when $q=0$, we need one additional bracket for each $k$ with $c_k = 0$: The missing constant term is provided by $[W_k, W'_k]$ which accounts for an additional $M$ brackets in total.
\end{enumerate}
\end{enumerate}
\item 
The coefficients in front of $H$ in Equation \eqref{eq-cstarterms} need to vanish, i.e.\ we need to choose $g$ and $\widetilde{g}_\ell$ such that
\begin{equation}\label{vanishcoeffinfrontofH}
H(g) + \sum_{\ell=1}^{N} (x_\ell H(\widetilde{g}_\ell) - x_\ell d_\ell \widetilde{g}_\ell ) = \sum_{\ell=1} ^{N} \sum_{k=1}^{n} \widetilde{f}_{k\ell} V_k(x_\ell).
\end{equation}
If $V_k(x_\ell) \in (x_1, \dots, x_N) \KK[X]$ for all $k = 1, \dots, n$ and $\ell = 1, \dots, N$, then the right-hand side contains no non-zero constant summands.
If $V_k(x_\ell) \notin (x_1, \dots, x_N) \KK[X]$ for some $k = 1, \dots, n$ and $\ell = 1, \dots, N$, then we may assume, after another additional number $\Delta$ of brackets, that the right-hand side contains no non-zero constant summands as well.

We carry out a similar computation as for the coefficients $F_k$ above. 
Assume $s \in \KK[X]$ is a function in the right-hand side of \eqref{vanishcoeffinfrontofH} which is homogeneous with respect to the grading induced by $H$ of non-zero degree. Then $s$ can be realized as $H(g)$ for some $g \in \KK[X]$ and hence a sum of such functions $s$ can also be realized as $H(g)$ for some $g \in \KK[X]$.
In particular, all linear terms $s$ in variables $x_1, \dots, x_N$ are obtained in this way.
To finish the proof, we need to show that each monomial $s \in \KK[X]$ of the right-hand side of \eqref{vanishcoeffinfrontofH}
with $H(s) = 0$ can be presented as $\sum_{\ell=1}^{N} (x_\ell H(\widetilde{g}_\ell) - x_\ell d_\ell \widetilde{g}_\ell )$ for some $\widetilde{g}_\ell \in \KK[X]$. We can assume that $s = x_\ell  h$ for some $\ell$ and some $h$. Since we have the equalities
\[
0 = H(s) = H(x_\ell  h) = H(x_\ell) h + x_\ell  H(h) = d_\ell x_\ell h +  x_\ell  H(h) 
\]
we conclude that $H(h) = -d_\ell h$. Note that by assumption $d_\ell\neq~0$. Now we set
  $\widetilde{g}_\ell = h/ (-2 d_\ell)$ and $\widetilde{g}_i = 0$, where $i \neq \ell$.  Then we get the equation
\begin{equation*}
s=x_\ell h = \sum_{i=1}^{N} (x_i H(\widetilde{g}_i) - x_i d_\ell \widetilde{g}_i ) \; .
\end{equation*}
 This finishes the proof. \qedhere
\end{enumerate}
\end{proof}

\begin{example}\label{korasrussel}
The Koras--Russell cubic threefold is given by the following equation in $\CC^4 \ni (x,y,z,w)$:
\[
x^2 y + x + z^2 +  w^3 = 0 \; .
\]
It is a well-known example of a smooth affine threefold that is diffeomorphic and even symplectomorphic to $\RR^6$ (see Casals and Murphy \cite{07036863}) but not algebraically isomorphic to $\CC^3$.
Let 
\[
H := 6x \frac{\partial}{\partial x} - 6y \frac{\partial}{\partial y} + 3z \frac{\partial}{\partial z} + 2w \frac{\partial}{\partial w}
\]
whose flow gives rise to a $\CC^\ast$-action on the Koras--Russell cubic threefold.

The following vector fields are tangential to the Koras--Russell cubic and span its tangent space in every point:
\begin{align*}
A &= 3w^2 \frac{\partial}{\partial z} - 2 z \frac{\partial}{\partial w}\\
B &= 3w^2 \frac{\partial}{\partial y} - x^2 \frac{\partial}{\partial w}\\
C &= 2z \frac{\partial}{\partial y} - x^2 \frac{\partial}{\partial z}\\
D &= -2z \frac{\partial}{\partial x} + (1 + 2xy) \frac{\partial}{\partial z}\\
E &= -x^2 \frac{\partial}{\partial x} + (1 + 2xy) \frac{\partial}{\partial y} \\
F &= -3w^2 \frac{\partial}{\partial x} + (1 + 2xy) \frac{\partial}{\partial w} \; .
\end{align*} 
With respect to the grading induced by $H$ they are of weight $10$, $9$, $-3$, $6$ and $-2$, respectively. The generators $x, y, z, w$ are of weight $6, -6, 3, 2$, respectively. We apply Proposition \ref{boundforC*var}. Due to the identity $x = -x^2y - z^2 - w^3$, we have $\CC[X] = \CC + (y,z,w)\CC[X]$. Hence $M=0$ and $N=3$. However, $D(z) = E(y) = F(w) = 1 + 2xy \notin (y,z,w)\CC[X]$. The other conditions are obviously satisfied. We compute the following sum of $3$ brackets
\begin{align*}
3[yC, D] -\frac{5}{2}[D, xD] + [A, wD] &= -6 y E + 5 z D + (-2 z D + 2 w F) \\
&= (1+2xy) H \\
&\equiv H \mod (y,z,w)\CC[X] \cdot H
\end{align*}
and thus obtain $\Delta = 3$. 
Therefore, the bracket width for the Koras--Russell cubic is a priori at most $7 = 1 + M + N + \Delta$.
\end{example}

\begin{question} \hfill
    \begin{enumerate}
        \item Is the Lie algebra $\Ve(X)$ for the Koras--Russell cubic threefold $X$ finitely generated?
        \item Is the bracket width of $\Ve(X)$ at least $2$? This would give another proof that the Koras--Russell cubic threefold is not algebraically isomorphic to $\CC^3$ since we know that the bracket width of $\Ve(\CC^3)$ is $1$.
    \end{enumerate}
\end{question}

\begin{example}\label{bracketwidthSLn}
The bracket width of $\Ve(\mathrm{SL}_n(\KK))$ over an algebraically closed field $\KK$ with $\Ch \KK = 0$ is at most $n^2 + n$:

Let $E_{ij} \in \mathrm{Mat}(n \times n; \KK)$ denote the elementary matrix with entry $1$ in the $i$-th row and $j$-th column and zeros elsewhere. We use the coordinates $(a_{ij})_{i,j=1}^{n}$ for $\mathrm{Mat}(n \times n; \KK) \supset \mathrm{SL}_n(\KK)$.

For $i \neq j$ we obtain the following well-known left-invariant vector fields that are LNDs:
\[
\left.\frac{d}{d t}\right|_{t = 0} \exp(t E_{ij}) =: V_{ij} = \sum_{m=1}^{n} a_{j m} \frac{\partial}{\partial a_{i m}} \; .
\]
For $k = 1, \dots, n-1$ we obtain the following well-known left-invariant vector fields whose flows are $\mathbb{G}_m$-actions:
\[
\left.\frac{d}{d t}\right|_{t = 0} \exp(t (-E_{kk}+E_{k+1,k+1})) =: H_{k} = \sum_{\ell=1}^{n} - a_{k \ell} \frac{\partial}{\partial a_{k \ell}} + a_{{k+1},\ell} \frac{\partial}{\partial a_{{k+1},\ell}} \; .
\]

A straightforward computation gives the following commutators:
 \begin{align*}
 [H_k, V_{k, k+1}] &=  2 V_{k, k+1} \\
 [H_k, V_{k+1,k}] &= -2 V_{k+1, k} \\
 [H_k, V_{k,   j}] &=    V_{k, j}, & j \notin \{k, k+1\} \\
 [H_k, V_{k+1, j}] &= -  V_{k+1, j}, & j \notin \{k, k+1\} \\
 [H_k, V_{i,   k}] &= -  V_{i, k}, & i \notin \{k, k+1\} \\
 [H_k, V_{i, k+1}] &=    V_{i, k+1}, & i \notin \{k, k+1\} \\
 [H_k, V_{i,   j}] &= 0, & i,j \notin \{k, k+1\} \\
 [H_1, H_k] &= 0,  & k = 1, \dots, n-1 \\
 [V_{ij},V_{pq}] &= \delta_{iq} V_{pj} - \delta_{pj} V_{iq} \\
 [V_{i,i+1},V_{i+1,i}] &= H_i \; .
 \end{align*}

We define the following vector field whose flow is also $\mathbb{G}_m$-action: 
\[
H := H_1 + 3 H_2 + 7 H_3 + 15 H_4 + \dots + (2^{n-1} - 1) H_{n-1}
\]
We now choose the $n^2-1$ left-invariant vector fields $V_{ij}$ with $1 \leq i \neq j \leq n$ and $H_k$ with $k=1, \dots, n-1$ that span the tangent space of $\mathrm{SL}_n(\KK)$ in every point. Observe that $[H, V_{ij}] = c_{ij} V_{ij}$ with $c_{ij} \neq 0$ and $[H,H_k] = 0$. Hence, we can apply Proposition \ref{boundforC*var} with $N = n^2$ and $M = n-1$.
\end{example}

\bigskip

For $\mathrm{SL}_2(\KK)$ we can further improve the bound on the bracket width.

\begin{proposition}\label{SL2generators}
The Lie algebra $\Ve(\mathrm{SL}_2(\KK))$ with $\Ch \KK = 0$ is generated by the following three (complete) vector fields: 
\[
V_{12}, a_{21} V_{21}, a_{22} V_{21} \; .
\]
\end{proposition}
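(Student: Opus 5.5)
The plan is to show that the Lie subalgebra $L \subset \Ve(\mathrm{SL}_2(\KK))$ generated by $V_{12}$, $a_{21}V_{21}$ and $a_{22}V_{21}$ contains the frame $V_{12}, V_{21}, H_1$. I will then show that $L$ contains $f\cdot V_{12}$, $f\cdot V_{21}$ and $f \cdot H_1$ for every $f \in \KK[\mathrm{SL}_2]$. Since these three fields span the tangent space at every point, Lemma \ref{lem-vecfieldcomb} then gives $L = \Ve(\mathrm{SL}_2(\KK))$.

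\textbf{Step 1: the frame lies in $L$.} Because $V_{21}(a_{21}) = a_{11}$ and $V_{21}(a_{22}) = a_{12}$, we get
\[
[a_{21}V_{21}, a_{22}V_{21}] = (a_{21}a_{12} - a_{22}a_{11}) V_{21} = -V_{21}
\]
on $\mathrm{SL}_2$, so $V_{21} \in L$. Then $[V_{21}, V_{12}]$ is $\pm H_1$ by the commutator table of Example \ref{bracketwidthSLn}, so $\mathfrak{s} := \langle V_{12}, V_{21}, H_1\rangle \cong \mathfrak{sl}_2$ lies in $L$.

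\textbf{Step 2: linear coefficients.} For $X \in \mathfrak{s}$ and a frame field $Y$ we have $[X, fY] = X(f)\,Y + f\,[X,Y]$, where $[X,Y] \in \mathfrak{s}$. So $\operatorname{ad}\mathfrak{s}$ acts on $\KK[\mathrm{SL}_2] \otimes \mathfrak{s}$ as the tensor product of the derivation action on functions with the adjoint action. Under the derivation action (which mixes the two rows), the degree-one functions split into the two modules $\operatorname{span}\{a_{11}, a_{21}\}$ and $\operatorname{span}\{a_{12}, a_{22}\}$, each isomorphic to the standard representation $V_1$. Each tensor product $V_1 \otimes \mathfrak{s} \cong V_1 \oplus V_3$. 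The element $a_{21}V_{21}$ is an extremal weight vector (it is killed by $\operatorname{ad} V_{21}$), and the same holds for $a_{22}V_{21}$. Applying $\operatorname{ad}V_{12}$ repeatedly and projecting with the weights of $\operatorname{ad}H_1$, I expect to obtain both the $V_3$ component and, after subtracting, the $V_1$ component. The $V_1$ component is the span of $a_{im}V_{21} + \dots$ terms, which can be seen concretely via $[V_{12}, a_{21}V_{21}] = \pm a_{21}H_1$. The goal is to conclude that $a_{im}\, Y \in L$ for all $i,m$ and $Y \in \{V_{12}, V_{21}, H_1\}$.

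\textbf{Step 3: induction on degree.} Suppose $L$ contains $f Y$ for all frame fields $Y$ and all $f$ given by polynomials of degree $\le d$ in the $a_{ij}$. To reach degree $d+1$ I would use the formula \eqref{KK-formula} with $\delta = \eta = V_{21}$ (and symmetrically with $V_{12}$):
\[
[f V_{21}, hg V_{21}] - [hf V_{21}, g V_{21}] = 2fg\, V_{21}(h)\, V_{21}.
\]
Taking $f = 1$ and $h = a_{21}$ (or $h = a_{22}$), this produces $2g\,a_{11}V_{21}$ (respectively $2g\,a_{12}V_{21}$) from terms of lower degree, which are in $L$ by induction. Hence multiplication by $a_{11}$ and $a_{12}$ of the $V_{21}$-coefficients is available, and symmetrically multiplication by $a_{21}, a_{22}$ of the $V_{12}$-coefficients. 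Bracketing with $\mathfrak{s}$, as in Step 2, moves these to the other frame fields and to the other coordinates. Here one uses that the $\mathfrak{s}$-module generated by $a_{11}^k$ contains all row-mixing translates. This yields all monomials of degree $d+1$ times each frame field. Characteristic $0$ is used to divide by $2$ and by the weights.

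\textbf{Main obstacle.} The hardest step is the bookkeeping in Steps 2--3. One must check that the $\operatorname{ad}\mathfrak{s}$-orbit really spreads a coefficient from $V_{21}$ to $H_1$ and $V_{12}$ \emph{without} leftover terms of the same degree. The derivations $V_{ij}$ preserve degree, so the leftover terms cannot be absorbed by induction. I would first try to handle this by an explicit weight-vector computation: show that $\KK[\mathrm{SL}_2]_{\le d}\otimes\mathfrak{s}$ is generated, as an $\mathfrak{s}$-module, by the highest-weight vectors $m\,V_{21}$ with $m$ a monomial in the second row, and that all of these are produced by \eqref{KK-formula}. The relation $\det = 1$ enters to reduce degrees where needed, exactly as in Step 1.
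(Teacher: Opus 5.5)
Your route differs from the paper's, which does not prove generation from scratch. It quotes the first author's earlier theorem that $V_{12}, V_{21}, (a_{12}+a_{21})V_{21}, a_{22}V_{21}$ generate $\Ve(\mathrm{SL}_2(\KK))$. Then your Step 1 (the paper's computation, up to sign) together with $[V_{21},a_{22}V_{21}]=a_{12}V_{21}$ already finishes the proof. A self-contained argument would be worth having, but yours has a genuine gap in Step 3.

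With $f=1$, $h=a_{21}$ and $\deg g=d$, the bracket $[V_{21},hg\,V_{21}]$ needs $a_{21}g\,V_{21}\in L$. That element has degree $d+1$, the same as the output $2g\,a_{11}V_{21}$, so nothing is obtained from lower degree and the induction is circular. Choosing $f$ and $g$ both nonconstant with $\deg f+\deg g=d$ does put all four inputs of \eqref{KK-formula} in degree $\le d$, but only for $d\ge 2$. Degree $2$ cannot be reached by \eqref{KK-formula} from inputs of degree $\le 1$ at all. So the degree-two base case needs a separate computation, e.g.\ with $[a_{1i}H_1,a_{1j}V_{21}]=-3a_{1i}a_{1j}V_{21}$, $[a_{1i}V_{12},a_{2j}V_{21}]=a_{1i}a_{2j}H_1$ and $[a_{2i}V_{21},a_{2j}V_{12}]=a_{2i}a_{1j}V_{12}-a_{2i}a_{2j}H_1$. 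You would also have to check that every isotypic component of the degree-two piece is reached.

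Second, the spreading step, which you yourself flag as the main obstacle, is not carried out, and the weight-vector claims you base it on are wrong. The element $a_{21}V_{21}$ is not killed by $\operatorname{ad}V_{21}$, since $[V_{21},a_{21}V_{21}]=a_{11}V_{21}$. Likewise $mV_{21}$ with $m$ a second-row monomial is not a highest weight vector, since $[V_{12},mV_{21}]=mH_1\neq 0$. The conclusion of Step 2 survives only because $a_{21}V_{21}$ has nonzero components in both the $V_1$ and the $V_3$ summand.

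For $d\ge 2$ the bookkeeping is actually short once set up correctly:
\begin{itemize}
\item \eqref{KK-formula} with $\delta=\eta=V_{21}$ yields $RV_{21}$ for every monomial $R$ of degree $d+1$ divisible by $a_{11}$ or $a_{12}$. With $\delta=\eta=V_{12}$ it yields $RV_{12}$ for every such $R$ divisible by $a_{21}$ or $a_{22}$.
\item Every monomial $R$ of degree $d+1\ge 3$ has two factors from the same row. Then $[V_{12},RV_{21}]=V_{12}(R)V_{21}+RH_1$ or $[V_{21},RV_{12}]=V_{21}(R)V_{12}-RH_1$, whose first term is already available, gives $RH_1$.
\item Finally $[V_{21},RH_1]=V_{21}(R)H_1+2RV_{21}$ and $[V_{12},RH_1]=V_{12}(R)H_1-2RV_{12}$ give all $RV_{21}$ and $RV_{12}$.
\end{itemize}
As written, though, with a circular induction step, no degree-two base case and the spreading left open, the proposal is a plan rather than a proof.
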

\begin{proof}
By the result of the first author \cite{MR4588161}*{Theorem 14} we know that the four complete vector fields $V_{12}, V_{21}, (a_{12} + a_{21})V_{21}, a_{22}V_{21}$ generate the Lie algebra of polynomial vector fields on $\mathrm{SL}_2(\KK)$ (which was stated for $\KK = \CC$, but only $\Ch \KK = 0$ is used). To improve this result to three generators, we only need to observe that 
\[
[a_{22}V_{21}, a_{21}V_{21}] = (a_{22} V_{21}(a_{21}) - a_{21} V_{21}(a_{22})) V_{21} = V_{21}
\]
where we used the defining equation $a_{22} a_{11} - a_{21} a_{12} = 1$. Moreover, direct computations show that $[V_{21}, a_{22} V_{21}] = a_{12} V_{21}$. Hence,
\[
(a_{12} + a_{21})V_{21} = [V_{21}, a_{22} V_{21}] + a_{21})V_{21}.
\]
Therefore, we  obtain the four known generators as suitable Lie  combinations of $V_{12}, a_{21} V_{21}, a_{22} V_{21}$. 
\end{proof}

\begin{lemma}[Roman\cprime kov \cite{MR3601334}*{Lemma 1}]
\label{lem-generatorbound}
The number of generators of a Lie algebra is an upper bound for its bracket width.
\end{lemma}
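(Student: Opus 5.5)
The idea is to show that if $L$ is generated by $x_1, \dots, x_m$, then
\[
[L,L] = [x_1, L] + [x_2, L] + \dots + [x_m, L].
\]
Once this holds, every $z \in [L,L]$ has the form $z = \sum_{i=1}^m [x_i, y_i]$ for suitable $y_i \in L$, so $\ell(z) \le m$, and the bracket width is at most $m$. The inclusion "$\supseteq$" is trivial, so only "$\subseteq$" needs proof.

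First, $L$ is spanned as a vector space by the Lie monomials in $x_1, \dots, x_m$. These are the iterated brackets of generators, each with a bracketing pattern and a degree, the degree being the number of generators involved.

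Second, we check that $[L,L]$ is spanned by the Lie monomials of degree at least $2$. A bracket of two elements of $L$ expands bilinearly into brackets of pairs of monomials. Each such bracket is again a Lie monomial, of degree at least $2$. Conversely, every monomial of degree at least $2$ is a bracket, so it lies in $[L,L]$.

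Third, and this is the one real input, we use the standard fact that every Lie monomial of degree $d \ge 2$ is a linear combination of right-normed monomials $[x_{i_1}, [x_{i_2}, [\dots, [x_{i_{d-1}}, x_{i_d}]\dots]]]$ of the same degree $d$. We prove this by induction on $d$, applying the Jacobi identity in the form $[[a,b],c] = [a,[b,c]] - [b,[a,c]]$. Consider a monomial $[u,v]$ with $u$ already right-normed and of degree at least $2$, say $u = [x_i, u']$. The Jacobi identity rewrites $[u,v]$ as $[x_i,[u',v]] - [u',[x_i,v]]$, and induction on the degree of the left factor finishes. Equivalently, one observes that $\operatorname{span}\{x_i\} + \sum_i [x_i, L']$ is closed under brackets, where $L'$ is the span of the right-normed monomials; this is the step I expect to need the most care, though it is classical.

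Given this fact, each right-normed monomial of degree at least $2$ with first letter $x_i$ lies in $[x_i, L]$. Hence every $z \in [L,L]$ can be written as $\sum_i [x_i, y_i]$, where $y_i$ is the sum of the inner parts of the right-normed monomials in $z$ whose first letter is $x_i$. This uses only bilinearity of the bracket and no assumption on the characteristic. In particular, when $L$ is perfect, as $\Ve(\mathrm{SL}_2(\KK))$ is, every element of $L$ is a sum of at most $m$ brackets.
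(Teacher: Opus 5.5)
Your proof is correct. The induction via the Jacobi identity (outer on total degree, inner on the degree of the left factor) shows that $[L,L]$ is spanned by right-normed monomials $[x_i,r]$ of degree at least $2$, and grouping these by their outermost generator gives $[L,L]=\sum_{i=1}^m [x_i,L]$, hence $\ell(z)\le m$.

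The paper gives no proof of its own and only cites Roman'kov. Your argument is the standard one behind that lemma, which is often phrased with left-normed commutators grouped by their last entry; that is just the mirror image of your right-normed version.
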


\begin{corollary}\label{bracketboundSL2}
The bracket width of the Lie algebra of polynomial vector fields on $\mathrm{SL}_2(\KK)$ with $\Ch \KK = 0$ is at most $3$.
\end{corollary}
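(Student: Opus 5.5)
The corollary follows by combining Proposition \ref{SL2generators} with Lemma \ref{lem-generatorbound}. The lemma requires that the Lie algebra be generated by the given elements. The bracket width, however, is only defined on the derived algebra $[L,L]$. So I would first confirm that $\Ve(\mathrm{SL}_2(\KK))$ is perfect, i.e.\ $[L,L]=L$.

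This holds because $\mathrm{SL}_2(\KK)$ is a smooth affine variety, so $\Ve(\mathrm{SL}_2(\KK))$ is simple by the result of Siebert \cite{Sie96}*{Proposition 1} cited in the introduction. A simple Lie algebra is nonabelian, hence equals its derived algebra. Alternatively, one can avoid simplicity: each of the generators $V_{12}$, $a_{21}V_{21}$, $a_{22}V_{21}$ is itself a bracket. For instance, $[H_1, V_{12}] = 2V_{12}$ with $H_1 = [V_{12},V_{21}]$. Once the generators lie in $[L,L]$, so does every Lie polynomial in them, and therefore $L=[L,L]$.

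With perfectness established, I would recall the mechanism behind Roman\cprime kov's bound. Let $L$ be generated by $x_1,\dots,x_m$ and be perfect. Every element of $L$ is a linear combination of iterated brackets. Each iterated bracket of length at least $2$ can be rewritten by the Jacobi identity as a combination of terms $[x_i, u]$. Moreover $L = \sum_i [x_i, L]$, because the span $\sum_i [x_i,L]$ together with the $x_i$ contains all Lie monomials, and each $x_i$ itself lies in $[L,L]$ and can be re-expanded. Since $u\mapsto [x_i,u]$ is linear, a sum $\sum_i [x_i,u_i]$ has length at most $m$.

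Applying this with $m=3$ and the generators of Proposition \ref{SL2generators} gives bracket width at most $3$. The only genuine point to check is the perfectness hypothesis, and it is settled by the simplicity of $\Ve(X)$ for smooth $X$.
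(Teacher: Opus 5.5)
Your proposal is correct and follows the paper's own proof, which simply combines the three generators of Proposition \ref{SL2generators} with Lemma \ref{lem-generatorbound}. The perfectness check you add is true but not needed: your Jacobi-identity argument already gives $[L,L]=\sum_{i}[x_i,L]$ for any Lie algebra generated by $x_1,\dots,x_m$, and the bracket width is by definition a supremum taken only over $[L,L]$.
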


\begin{proof}
We use Lemma \ref{lem-generatorbound} to obtain the bound for the bracket width.
\end{proof}

\section{Funding}

The first author was supported by the European Union (ERC Advanced grant HPDR, 101053085 to Franc Forstneri\v{c}).

\end{document}